\renewcommand{\section}{\@startsection%
{section}%
{1}%
{0em}%
{1.7em}%
{1.2em}%
{\normalfont\large\centering\bfseries}}
\renewcommand{\@seccntformat}[1]%
{\csname the#1\endcsname.\hspace{0.5em}}
\numberwithin{equation}{section}
\newtheorem{teorema}{Theorem}[section]
\newtheorem{proposicion}{Proposition}
\theoremstyle{definition}
\begin{document}
\begin{titlepage}
\title{A short proof of F. Riesz representation Theorem
}

\author{
\textbf{Rafael del Rio}
\\
\small \texttt{delrio@iimas.unam.mx}
\\
\textbf{Asaf L. Franco}
\\
\small \texttt{asaflevif@hotmail.com}
\\
  \textbf{Jose A. Lara}   
\\
\small\texttt{nekrotzar.ligeti@gmail.com}
\\
\small Departamento de F\'{i}sica Matem\'{a}tica\\[-1.6mm]
\small Instituto de Investigaciones en Matem\'aticas Aplicadas y en Sistemas\\[-1.6mm]
\small Universidad Nacional Aut\'onoma de M\'exico\\[-1.6mm]
\small C.P. 04510, M\'exico D.F.\\[-1.6mm]
\\[2mm]
}
\date{}
\maketitle
\vspace{-4mm}
\begin{center}
\begin{minipage}{5in}
  \centerline{{\bf Abstract}} \bigskip
A direct proof  of the  Riesz representation theorem is provided. This theorem characterizes the linear functionals acting  on the vector space $\mathcal C(K)$ of continuous functions  defined on a compact subset $K$ of the real numbers $\mathbb{R}$. The proof avoids complicated arguments commonly used in generalizations of Riesz original theorem.

  \end{minipage}
\end{center}
\thispagestyle{empty}
\end{titlepage}
\section{Introduction}
\label{sec:intro}
The Riesz representation theorem is a remarkable result which describes the continuous linear functionals acting on the space of continuous functions defined on a set $K$. It is very surprising that all these functionals are just  integrals and vice versa. In case  $K$ is a closed interval of real numbers, any such functional is represented by  Riemann-Stieltjes integral,  which is a generalization of the usual Riemann integral. This  was  first announced by F. Riesz   in 1909  \cite{Riesz}.  In case $K$ is  compact set (not necessarily  a closed interval), then a more general concept of integral is needed, because the Riemann-Stieltjes integral  used by Riesz is defined only for functions on intervals. In this work, we prove that there is a short path between the two cases.


 Besides its aesthetic appeal, the above mentioned theorem has far-reaching applications.
 It allows a short  proof of the Kolmogoroff consistency theorem, see \cite{Co} thm 10.6.2., and can be used to give an elegant proof of the spectral theorem for selfadjoint bounded operators, see section VII.2 of \cite{MR751959}.
 Both these theorems are main results in probability and functional analysis respectively. Moreover,  the entire theory of integration for general spaces can be recovered using  the theorem of Riesz. See for example \cite{Rud}, where the Lebesgue measure on $\mathbb R^n$ is constructed. More  generally it can also be used to show the existence of the Haar measure on a group, see  \cite{Co} chap. 9. 
 
%
%
 In this note we  give a short proof of the Riesz representation theorem for the case $K$ is an arbitrary compact set of real numbers, see Theorem \ref{RC} below. This is interesting because in many situations we have a compact set which is not a closed interval. To prove the spectral theorem, for example, one considers the set of continuous functions defined on the spectrum of selfadjoint bounded operator, which is a compact set of $\mathbb R$, but not necessarily a closed interval.  We get our result  starting from the nondecreasing function that appears in the Riemann-Stieltjes integral representation of Riesz original formulation. To this function we associate a measure which is used to integrate over  general compact sets. Then we show how  this  Lebesgue integral representation can be seen as a Riemann-Stieltjes integral again.  Our proof is new, its reliance on measure theory make it not completely elementary, but it is very direct and quite simple.

 \section{Preliminaries } \
  
  Let us introduce first some definitions and notations we shall use.
   \subsection {Definitions and notation.}
   
  Let $\mathcal{C}(K):=\{ f:K\to \mathbb{R} : f \text{ continuous}\} $ where $K$ is a compact subset of  $\mathbb{R}$, the real numbers.
 A    {\it functional}  is an assignment  $L: \mathcal{C}(K)\to \mathbb{R}$. The functional is linear if   
 $L(c_1f+c_2g)=c_1L(f)+c_2 L(g)$ for all $f,g\in \mathcal{C}(K), c_1,c_2 \in \mathbb{R} $. It is continuous  if there exists a fixed $M>0$ such that $ |Lf|\le M \|f\|_\infty$ for all $f\in \mathcal{C}(K)$, where $\| \cdot\|_\infty$ denotes the uniform norm, that is, $\|f\|_\infty=\sup \{|f(x)|: x\in K\}$.
 We define the norm of such functional  as 
$$\|L\|_{\mathcal{C}(K)}=\|L\|= \sup \{|L(f)|:f\in \mathcal{C}(K) \text{  and  }\,\|f\|_\infty \le 1 \},$$

 We denote the set of the {\it linear continuous functionals} on  $\mathcal{C}(K)$ by $\mathcal{C}(K)^*$. It is called the {\it dual space}. In general, the dual of normed linear space $X$ is denoted by $X^*$.
  A functional $L$ on $\mathcal{C}(K)$ is said to be a {\it positive}  if $L(f)\geq 0$ whenever $f(x)\geq 0$ for 
   every $x\in\mathbb{R}$. We use the notation  $C(K)^*_+$  for the set of { \it positive linear functionals} on $C(K)$.\
   
The function  $\alpha:[a,b]\longrightarrow\mathbb R$ is said to be {\it normalized}, if $\alpha(a)=0$ and $\alpha(t)=\alpha(t+),\, $  $a< t< b$, that is, $\alpha$ is continuous from the right inside the interval (not at a!  If it were right continuous at $a$, theorem (\ref{R}) would not hold for the  functional $L(f)=f(a)$). The total variation of  a monotone increasing function $\alpha$  is defined as $V(\alpha)= \alpha (b)-\alpha (a)$.
We denote  the characteristic function of a set $A\subset [a,b]$  by $ \mathbf{1}_{A}$ where  
$\mathbf{1}_{A}(x)=1$ if $x \in A$ and $0$ if $x \in [a,b]\setminus A$.
 
 \subsection{Representation theorem for functionals on $\mathcal C[a,b]$.}\
 
 We formulate the above-mentioned result by F. Riesz as follows:

 
\begin{teorema} \label{R}
Let $L:\mathcal{C}[a,b]\longrightarrow\mathbb R$ be a positive linear functional. There exists a unique normalized monotone function $\alpha:[a,b]\longrightarrow\mathbb R$ such that
\begin{equation}\label{original}
Lf=\int_a^bf(x)d\alpha(x).
\end{equation}
The integral is understood in the sense of Riemann-Stieltjes.
 Moreover $\| L \| = V(\alpha).$

\end{teorema}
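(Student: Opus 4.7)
The plan is to recover $\alpha$ directly from $L$ by applying it to continuous approximations of the indicator functions $\mathbf{1}_{[a,t]}$, and then to derive \eqref{original} by approximating an arbitrary $f\in\mathcal{C}[a,b]$ in terms of such step-like building blocks. For each $t\in(a,b)$ and each sufficiently small $\varepsilon>0$, let $\phi_{t,\varepsilon}\in\mathcal{C}[a,b]$ be the function that equals $1$ on $[a,t]$, vanishes on $[t+\varepsilon,b]$, and is linear on $[t,t+\varepsilon]$. Since $\phi_{t,\varepsilon}$ decreases pointwise as $\varepsilon\downarrow 0$ and $L$ is positive, $L(\phi_{t,\varepsilon})$ is monotone decreasing in $\varepsilon$ and bounded below by $0$, so I can define
\[
\alpha(t):=\lim_{\varepsilon\downarrow 0}L(\phi_{t,\varepsilon})\ \text{for } t\in(a,b),\qquad \alpha(a):=0,\qquad \alpha(b):=L(\mathbf{1}_{[a,b]}).
\]

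Monotonicity of $\alpha$ is immediate from $\phi_{s,\varepsilon}\le\phi_{t,\varepsilon}$ for $s\le t$, and right continuity on $(a,b)$ follows from the uniform convergence $\phi_{s,\varepsilon_0}\to\phi_{t,\varepsilon_0}$ as $s\downarrow t$ (the graphs simply translate) combined with continuity of $L$. For the representation, fix $f$ and $\eta>0$; by uniform continuity choose a partition $a=t_0<\cdots<t_n=b$ with $|f-f(t_k)|<\eta$ on each $[t_{k-1},t_k]$, and for $\varepsilon$ sufficiently small set $\psi_k:=\phi_{t_k,\varepsilon}-\phi_{t_{k-1},\varepsilon}$ (with the conventions $\phi_{a,\varepsilon}\equiv 0$ and $\phi_{b,\varepsilon}\equiv 1$). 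The $\psi_k$ are nonnegative and sum to $1$ on $[a,b]$, so $\sum_k f(t_k)\psi_k$ approximates $f$ to within $\eta$ in uniform norm. Applying $L$ and using $\|L\|<\infty$ yields
\[
L(f)=\sum_{k=1}^{n}f(t_k)\,L(\psi_k)+O(\eta),
\]
and sending $\varepsilon\downarrow 0$ converts each $L(\psi_k)$ into the difference $\alpha(t_k)-\alpha(t_{k-1})$, exhibiting the sum as a Riemann-Stieltjes sum for $\int_a^b f\,d\alpha$; refining the partition then completes the identification.

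Uniqueness and the norm equality come out cheaply at the end. If $\tilde\alpha$ also represents $L$, then computing $\int\phi_{t,\varepsilon}\,d\tilde\alpha$ and sending $\varepsilon\downarrow 0$ recovers $\tilde\alpha(t)$ on $(a,b)$ from its right continuity, while the endpoint values are pinned by normalization, so $\tilde\alpha=\alpha$. For the norm, $|L(f)|\le\|f\|_\infty\cdot V(\alpha)$ comes straight from \eqref{original}, and $\|L\|\ge L(\mathbf{1}_{[a,b]})=\alpha(b)-\alpha(a)=V(\alpha)$ by testing against the unit function.

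The main technical obstacle I foresee is managing the nested limits in the representation step: one must let $\varepsilon\downarrow 0$ (to convert the approximating $\psi_k$ into exact $\alpha$-differences), refine the partition (to recognize the Riemann-Stieltjes sum), and control the $O(\eta)$ error from uniform approximation of $f$, all in the right order, while keeping careful track of the boundary contributions at $a$ and $b$. The asymmetric normalization $\alpha(a)=0$ together with right continuity only on the open interval is forced precisely by this endpoint bookkeeping, as the warning about $L(f)=f(a)$ following the definition of \emph{normalized} already anticipates.
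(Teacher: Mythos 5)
Your proposal is correct, but it follows a genuinely different route from the paper. The paper invokes the Hahn--Banach theorem to extend $L$ to a functional $\Lambda$ on the space of \emph{bounded} functions, defines $\alpha(x)=\Lambda(\mathbf{1}_{[a,x]})$ by applying $\Lambda$ to actual indicator functions, and then approximates $f$ uniformly by step functions. You instead stay entirely inside $\mathcal{C}[a,b]$: you approximate the indicators from outside by the continuous ramps $\phi_{t,\varepsilon}$, use positivity to get a monotone decreasing limit defining $\alpha(t)$, and replace the step functions by the continuous partition of unity $\psi_k=\phi_{t_k,\varepsilon}-\phi_{t_{k-1},\varepsilon}$. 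This is essentially the construction underlying Riesz's original argument and the modern Riesz--Markov proofs, and it buys several things: it avoids Hahn--Banach (and the choice principle behind it); it makes monotonicity and right continuity of $\alpha$ transparent consequences of positivity, whereas in the paper's route the extension $\Lambda$ need not be positive, so the monotonicity of $x\mapsto\Lambda(\mathbf{1}_{[a,x]})$ is not actually immediate and is glossed over; and you supply a uniqueness argument (via $\int\phi_{t,\varepsilon}\,d\tilde\alpha\to\tilde\alpha(t)$ and right continuity), which the paper's proof omits entirely. The price is the bookkeeping you already identify: the double limit in $\varepsilon$ and the mesh, and the fact that $\psi_k$ is supported on $[t_{k-1},t_k+\varepsilon]$ rather than $[t_{k-1},t_k]$, so the uniform-continuity estimate must be taken on the slightly enlarged intervals; both are routine. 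All other steps (monotone convergence of $L(\phi_{t,\varepsilon})$, the norm identity via testing against $\mathbf{1}$, and the bound $|L(f)|\le\|f\|_\infty V(\alpha)$) check out.
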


The Riemann-Stieltjes integral is a generalization of the Riemann integral, where instead of taking the length of the intervals,  a $\alpha$-weighted length is taken. For an interval $I$ the $\alpha$-length is given by $\alpha(I)= \alpha(y)-\alpha (x)$, where $x,y$ are the end points of $I$ and $\alpha$ is a function of finite variation. The integral of a continuous function $f$ on $[a, b]$ is defined as the limit, when it exists, of the sum $\sum_i f(c_i) \alpha (I_i)$ where $\{I_i\}$ is a finite collection of subintervals whose endpoints form a partition of $[a,b]$ and $c_i\in I_i$. See \cite{MR1068530} p.122.

There are different proofs of the above theorem, see  for example \cite{MR1861991}. Here we will give a sketch of the proof which uses  the following result  about extensions of functionals known as the Hahn-Banach theorem:

 {\it Let $X$ a normed linear space, $Y$ a subspace of $X$, and $\lambda$ an element of $Y^*$. Then there exists a $\Lambda\in X^*$ extending $\lambda$ with the same norm}. See \cite{MR751959} for a proof.

\begin{proof}[ Proof of theorem \ref{R}].\
 We may assume that $[a,b]=[0,1]$. Since $L\in \mathcal{C}[0,1]^*$ we use Hahn-Banach theorem to conclude the existence of $\Lambda \in B[0,1]^*$ such that $\|\Lambda\|=\|L\|$ and $L=\Lambda$ on $\mathcal{C}[0,1]$ and where $B[0,1]$ is the set of bounded functions on $[0,1]$. \\

Let us define the functions $\mathbf{1}_x:= \mathbf{1}_{[0,x]}$, that is $\mathbf{1}_x(t)=1$ when $t\in [0,x]$ and zero otherwise. Set $\alpha (x) = \Lambda(\mathbf 1_x)$ for all $x\in [0,1]$. \\

Now for $f\in \mathcal{C}[0,1]$, define
$$f_n =\sum_{j=1}^n f(j/n) (\mathbf{1}_{j/n}-\mathbf{1}_{(j-1)/n}).$$

Since $f$ is continuous,  it is uniformly continuous on $[0,1]$ and so $\|f_n-f\|_\infty\to 0$. Thus
$$ \lim_n \Lambda (f_n) =\Lambda (f)=L(f).$$

Using the definition of $\alpha$ we get
$$\Lambda(f_n)=\sum_{j=1}^n f(j/n) (\alpha(j/n)-\alpha((j-1)/n)).$$

This in turn implies 
$$ \Lambda (f)=\lim_n \Lambda (f_n)=\int_0^1 f \,d\alpha.$$
 Now to see that $\| L \| = V(\alpha)$ : \\

 Let $\varepsilon>0$ and choose $f\in \mathcal C[0,1]$ such that $\|f\|_\infty\le 1$ and $\|L\|\le |L(f)|+\varepsilon$, we apply (\ref{original}) and we get

$$\|L\|\ \le|L(f)|+\varepsilon=\left|\int_0^1f(x)d\alpha(x)\right|\ + \varepsilon  \le \alpha(1)-\alpha(0)+\varepsilon=V(\alpha)+\varepsilon. $$

It is possible to normalize $\alpha$ and in this case we easily have the other inequality, that is, 
$$V(\alpha)=\alpha(1)-\alpha(0)=\alpha(1)=\Lambda(\mathbf 1_1)\le \|\Lambda\|=\|L\|.$$

\end{proof}

\textbf{Remarks}. \begin{itemize} 
\item [(1)] The standard textbook's proof uses Hahn-Banach's theorem (\cite{MR0467220},\cite {MR1861991}), but the original proof of  F. Riesz does not use it. See \cite{MR1068530} section 50 and \cite{Riesz1},\cite{MR0051436}.
\
\item[(2)]
 E. Helly \cite{Helly} should have similar results. J. Radon extended theorem \ref{R} to compact subsets $K\subset\mathbb{R}^n $ \cite{MR925205}. S. Banach and S. Saks extended the result to compact metric spaces, see appendix of \cite{MR0167578} and \cite{MR1546062}. The proof by S. Saks is particularly elegant and clean. For compact Hausdorff spaces the theorem was proven by S. Kakutani \cite{MR0005778} and for normal spaces by A. Markoff \cite{Mar}. Nowadays this theorem is also known as Riesz-Markoff or Riesz-Markoff-Kakutani theorem. More information on the history  of this theorem can be found in \cite{MR1681462} p. 231, the references therein,  \cite{MR3408971} p.238 and \cite{MR0753703}.
 \item  [(3)] Positivity of a linear functional $L$ implies continuity of $L$. To see it, we take the function $\textbf{1}(x)=1$  for all $x\in K$, then $\textbf{1}\in\mathcal C(K)$ and $|f(x)|\leq\|f\|_{\infty}\textbf{1}(x)$, therefore $$\|f\|_{\infty}\,\textbf{1}(x)\pm f(x)\geq0\qquad\mbox{implies}\qquad\,\|f\|_{\infty}L(\textbf{1})\pm L(f)\geq0$$ so $|L(f)|\leq L(\textbf{1})\|f\|_{\infty}.$ See \cite{MR1681462}  Prop. 7.1.

\end{itemize}

\section {Main Result} \


 Next theorem is our main result. It is a generalization of Theorem \ref{R} to continuous functions defined on arbitrary compact sets $K\subset \mathbb R$. Since an ordinary Riemann-Stieltjes integral is not defined for functions on
 general compact $K$, we shall introduce the Lebesgue integral which makes sense for such functions. In the Appendix, we collect the  basic facts and definitions of measure theory we need.

\begin{teorema}\label{RC}
Let  $K$ a compact subset of $\mathbb R$ and let $\ell:\mathcal {C}(K)\rightarrow \mathbb R$ be a positive linear functional. Then, there is a unique finite Borel measure $\mu$ such that $\mu(K)=\|\ell \|\ _{C(K)^*}$ and
\begin{equation}\label{eqn}  \ell f=\int_Kfd\mu. \end{equation}
\end{teorema}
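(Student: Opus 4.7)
The plan is to reduce Theorem \ref{RC} to the interval case settled by Theorem \ref{R}. Let $a=\min K$ and $b=\max K$, which exist by compactness. The restriction of any $f\in\mathcal{C}[a,b]$ to $K$ is continuous, so I would set
$$L(f):=\ell(f|_K),\qquad f\in\mathcal{C}[a,b],$$
which is plainly a positive linear functional on $\mathcal{C}[a,b]$. Theorem \ref{R} then furnishes a unique normalized monotone $\alpha:[a,b]\to\mathbb R$ with $L(f)=\int_a^b f\,d\alpha$, and I would let $\mu_\alpha$ denote the associated finite Lebesgue--Stieltjes Borel measure on $[a,b]$ (so that $\int_{[a,b]}f\,d\mu_\alpha=\int_a^bf\,d\alpha$ for continuous $f$).

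The heart of the argument is to show that $\mu_\alpha$ is concentrated on $K$, and this is the step I expect to require the most care. Since $K$ is closed and contains $a,b$, the open set $[a,b]\setminus K$ decomposes as a countable disjoint union of open intervals $(a_n,b_n)\subset[a,b]$ whose endpoints lie in $K$. For each $n$ and each small $\varepsilon>0$ I would construct a continuous tent $\varphi_{n,\varepsilon}\in\mathcal{C}[a,b]$ with $0\le\varphi_{n,\varepsilon}\le 1$, equal to $1$ on $[a_n+\varepsilon,b_n-\varepsilon]$ and vanishing outside $(a_n,b_n)$. Since $(a_n,b_n)\cap K=\emptyset$, the function $\varphi_{n,\varepsilon}|_K$ is identically zero, so $\int\varphi_{n,\varepsilon}\,d\mu_\alpha=L(\varphi_{n,\varepsilon})=\ell(0)=0$. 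Letting $\varepsilon\downarrow 0$ along an increasing sequence and invoking monotone convergence yields $\mu_\alpha((a_n,b_n))=0$, and summing over $n$ gives $\mu_\alpha([a,b]\setminus K)=0$.

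With the measure concentrated on $K$, I would define $\mu$ as the restriction of $\mu_\alpha$ to the Borel subsets of $K$. For $g\in\mathcal{C}(K)$, the Tietze extension theorem produces $\tilde g\in\mathcal{C}[a,b]$ with $\tilde g|_K=g$, and then
$$\ell(g)=L(\tilde g)=\int_{[a,b]}\tilde g\,d\mu_\alpha=\int_K g\,d\mu,$$
the last equality being justified because $\mu_\alpha$ lives on $K$, where $\tilde g$ coincides with $g$; in particular the answer is independent of the choice of extension. The norm identity $\mu(K)=\|\ell\|$ follows at once from Remark (3): positivity forces $\|\ell\|=\ell(\mathbf 1)$, and $\ell(\mathbf 1)=\int_K 1\,d\mu=\mu(K)$.

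For uniqueness, suppose $\mu_1$ and $\mu_2$ are finite Borel measures on $K$ both representing $\ell$. Then $\int g\,d\mu_1=\int g\,d\mu_2$ for every $g\in\mathcal{C}(K)$. Approximating the characteristic function of any closed $F\subset K$ from above by the continuous functions $x\mapsto\max(0,1-n\,d(x,F))$ and applying dominated convergence forces $\mu_1(F)=\mu_2(F)$; regularity of finite Borel measures on the compact metric space $K$ then yields $\mu_1=\mu_2$ on all Borel sets, completing the argument.
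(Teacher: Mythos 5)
Your proposal is correct, and its skeleton coincides with the paper's: pull $\ell$ back to a positive functional $L$ on $\mathcal C[a,b]$ via restriction, apply Theorem \ref{R} to obtain $\alpha$ and its associated Lebesgue--Stieltjes measure, show that this measure charges nothing off $K$, recover \eqref{eqn} by continuously extending each $g\in\mathcal C(K)$ to $[a,b]$, and prove uniqueness by approximating $\mathbf 1_C$ for closed $C$ by $\max\{0,1-k\,d(\cdot,C)\}$ together with dominated convergence and regularity --- the norm identity via $\|\ell\|=\ell(\mathbf 1)=\mu(K)$ is also the paper's. The one step where you genuinely diverge is the proof that $\mu_\alpha([a,b]\setminus K)=0$. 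The paper argues by inner regularity: it chooses a closed $F_\varepsilon\subset K^c$ with $\mu(K^c\setminus F_\varepsilon)<\varepsilon$, builds a Urysohn function equal to $1$ on $K$ and $0$ on $F_\varepsilon$, and compares $L(\tilde f)$ with $L(\mathbf 1_{[a,b]})$. You instead use the structure of open subsets of $\mathbb R$: write $K^c$ as a countable disjoint union of open intervals $(a_n,b_n)$ missing $K$, observe that every tent function supported in $(a_n,b_n)$ restricts to $0$ on $K$ and hence has vanishing integral, and let the tents increase to $\mathbf 1_{(a_n,b_n)}$; countable additivity finishes the job. Your route avoids invoking inner regularity of Borel measures (item (a) of the Appendix) at this stage, using only the Lindel\"of decomposition that the paper needs anyway for its explicit extension $f^*$, so it is slightly more self-contained and elementary; the paper's Urysohn-based argument is the one that generalizes to compact subsets of metric spaces, where $K^c$ is not a union of intervals. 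Your normalization $a=\min K$, $b=\max K$ is harmless (it makes the $a_n,b_n$ lie in $K$, though your argument only needs $(a_n,b_n)\cap K=\emptyset$), and your appeal to Tietze in place of the paper's piecewise-linear extension changes nothing of substance.
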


\noindent \textit{Proof}. The proof proceeds in stages. \\
\begin{itemize}
\item[i)]\textit{Integral representation}. Let $[a,b]$ be a closed and bounded interval containing $K$. Let $r:\mathcal C[a,b]\longrightarrow \mathcal C(K)$ be the restriction operator, that is, for every $f\in \mathcal C[a,b]$, $r(f)(x)=f(x)$ for $x\in K$. It is clear that $r$ is a bounded linear operator, so we can define its transpose operator, see \cite{MR3408971} p.11, also known as adjoint, see \cite{MR1861991}. Recall $r^t$ is defined as follows $r^t: \mathcal C(K)^* \to \mathcal C[a,b]^*$, $r^t(\ell)(f)=\ell(r(f))$ for $f\in \mathcal C[a,b]$; the expression $\ell(r(f))$ assigns a scalar to each function $f\in \mathcal C[a,b]$. \\
Let $\ell$ be a positive linear functional in $C(K)$ and we define $Lf=r^t(\ell) (f)=\ell (rf)$. Since $\ell$ and $r$ are positive linear functionals, so is $L$ and we can apply theorem \ref{R} and (c) in the Appendix to find a monotone increasing function $\alpha$ and an associated Borel measure $\mu$ such that
\begin{equation}\label{nosabemos}
Lf=r^t(\ell)(f)=\int_a^bfd\alpha=\int_a^bfd\mu
\end{equation}
for every $f\in\mathcal C[a,b]$.\\
 Denote $K^c := [a,b]\setminus K$.
We will show that $\mu(K^c)=0$.  
Let  $\varepsilon>0$ and choose $F_\varepsilon$ as a closed subset of $K^c$ such that 
\begin{equation}\label{Fe}
\mu (K^c\setminus F_\varepsilon)<\varepsilon,
\end{equation}
 see (a) in the Appendix. 

Let $\tilde f\in \mathcal C[a,b]$ be a continuous function such that $\tilde f(x)=1$ if $x\in K$, $\tilde f(x)=0$ if $x\in F_\varepsilon$ and $\|\tilde f\|_\infty\le 1$.  One can take for instance $$ \tilde f(x)=\frac{d(x,F_\varepsilon)}{d(x,F_\varepsilon)+d(x,K)},$$  where $d(x,A)=\inf_{y\in A}|x-y|$.  Note that since $|d(x,A)-d(y,A)|\le|x-y|$ the function $d(x,A)$  is even uniformly continuous, (cf. Urysohn's Lemma. \cite{MR1681462}, 4.15.).  Therefore

\begin{equation*}L(\tilde f)=\int_a^b\tilde fd\mu=\int_Kd\mu+\int_{K^c\backslash F_\varepsilon}\tilde fd\mu
+\int_{F_\varepsilon}\tilde fd\mu
 \end{equation*}
  The third integral on the right is equal zero, by definition of $\tilde f$. We can estimate  the second integral as follows,

\begin{align*}0\leq\int_{K^c\backslash F_\epsilon}\tilde fd\mu\leq\int_{K^c\backslash F_\varepsilon}d\mu=\mu(K^c\backslash F_\varepsilon)<\varepsilon, \end{align*}
since $\tilde f\leq1$  and using  \eqref{Fe}. Then
\begin{align*} L(\tilde f)<\int_Kd\mu+\varepsilon=\mu(K)+\varepsilon.\end{align*}
 We have that
$$\mu(K)+\mu(K^c)=\int_a^bd\mu=L(\mathbf{1}_{[a,b]} )=L(\tilde f)<\mu(K)+\varepsilon,$$

 The third equality  follows from $r(\tilde f)=r(\mathbf{1}_{[a,b]} )$. Thus $0\leq\mu(K^c)<\varepsilon$,
 since $\mu(K)<\infty$.
\\

To conclude, let $f\in\mathcal C(K)$ and  $f^*$ a continuous extension of $f$ to the closed interval $[a,b]$. We
can do this extension taking, for example, straight lines as follows:
since $K^c$ is an open subset of $[a,b]$, it is at most a countable union of pairwise disjoint open intervals $(\alpha_i, \beta_i)$ intersected with the interval  $[a,b]$, (see Lindeloef's thm., \cite{MR0151555} Prop.9. p.40). For $ x\in (\alpha_i,\beta_i)$ we define
$$f^*(x) =(1-t) f(\alpha_i)+t f(\beta_i)$$ if $x= \alpha_i (1-t)+t \beta_i$ for $t\in (0,1).$
 The function $f^*$ is continuous on the interval $[a,b]$ since on $K$ coincides with  the continuous function $f$ and on $K^c$ consists of straight lines, (cf. Tietze's Theorem \cite{MR1681462}, 4.16). \

   Then we have 
\begin{equation} \label{eqn: 2}
\ell(f)=\ell(r(f^*))=Lf^*=\int_a^b f^*d\alpha=\int_a^b f^*d\mu=\int_Kf^*d\mu=\int_Kfd\mu.\end{equation}
as was to be shown.\\

\item[ii)]\textit{Conservation of norm}.  Take $f\in \mathcal{C}(K)$ such that $\|f\|_\infty\le 1$. Since \eqref{eqn} holds we have,
$$|\ell (f)|= \left |\int_Kfd\mu  \right|\le \|f\|_\infty\,\mu (K) \le \mu(K).$$

For the reverse inequality, let $\textbf{1}(x)=1$ for all $x$,  as defined in remark (3), so 
$$\|\ell\|\ge |\ell(\textbf{1})|=\left |\int_K \textbf{1}\, d\mu \right| = \mu(K),$$
we can conclude that $\mu(K) = \|\ell \|$.  \\

\item[iii)]\textit{Uniqueness}. Suppose $\mu$ and $\nu$ are finite measures that satisfy \eqref{eqn}. Since $\mu$ and $\nu$ are regular measures, from (a) in the Appendix, it is enough to show that $\mu(C) =\nu(C)$ for any closed set $C$ of $K$. Let $C$ a nonempty closed set of $K$ and set $f_k (x):= \max\{0,1- k d(x,C)\}$ for all $k$ and $x\in K$, where $d(x,C)=\inf_{y\in C}|x-y|$. These functions are bounded, by $0$ and $1$, and continuous. Thus $f_k$ belongs to $\mathcal C(K)$ for all $k$. Notice that they form a sequence that decreases to the indicator of $C$, i.e., $f_k\downarrow \textbf{1}_C$, where  $\textbf{1}_C(x)=1$ if $x\in C$ and $\textbf{1}_C(x)=0$ if $x\notin C$. Thus, for all $k$ we must have that  $\int_K f_k d\mu =\int_K f_k d\nu$, and so we can use the dominated convergence theorem, see (b) in the Appendix, to conclude that 
$$\mu(C) = \lim_k \int_K f_k d\mu =\lim_k \int_K f_k d\nu= \nu (C). $$
\end{itemize}
 \hfill\qed\\

{\bf{Remarks}} \begin{itemize} 

\item[(a)] It is possible to represent the linear positive functionals acting on $\mathcal C(K)$ as Riemann-Stieltjes integrals, similar to the original work of F. Riesz. This follows immediately from the chain of equalities \eqref{eqn: 2}. The caveat is that we cannot use $f$ directly in order to define the Riemann-Stieltjes integral, but any continuous extension of $f$ works, cf. theorem \ref{iso1} below. This integral is independent of the  extension of $f$.

\item[(b)] As just seen, the use of compact set $K$ above allows us to extend the continuous functions  to the entire interval $[a,b]$, using an elementary version of the Tietze's theorem. This construction is in general not possible if $K$ is an arbitrary subset of the real line.


\end{itemize}

\subsection{Isomorphic spaces}\

As a consequence of the previous results, we shall see that two spaces of functionals are practically the same.
One of the spaces consists of  Lebesgue integrals on compact subsets of $[a,b]$ and the other of Riemann-Stieltjes integrals over the whole interval $[a,b]$. In this way we show how the  Lebesgue integral representation, that was introduced to represent functionals in the case of general compact sets, can be seen as a Riemann-Stieltjes integral.
To state this precisely we need to introduce the terms {\it isomorphic} and 
 {\it constant in  $K^c$}.
 
 A transformation $T$ which preserves the norm, that is $ \| Tx\| =\|x\|$, is called an {\it isometry.}
 Two normed vector  spaces $X$ and $Y$ are said to be {\it isomorphic} if there is a linear, bijective, isometry
   $T: X\to Y$. Such functions are  called  {\it isomorphisms}.
 Since an isomorphism preserves the linear as well as the metric structure of the spaces, two isomorphic spaces can be considered identical, the isomorphism corresponding  just to a labeling of the elements.  We say that the monotone function $\alpha  \text{ \it is constant in } K^c$  if it is constant in each interval of $K^c$.


 Recall that  $\mathcal C(X)^*_+$ denotes the set of positive linear functionals on 
$\mathcal C(X)$.\\

The result mentioned above can be then stated as follows:

 \begin{teorema}\label{iso1}
  The normed spaces $ \left \{  L_\alpha\in \mathcal C[a,b]^*_+ : \alpha  \text{ is constant in } K^c    \right \}$   and $ \mathcal C(K)^*_+$ are isomorphic.
 \end{teorema}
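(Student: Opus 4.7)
The natural candidate is the transpose $\Phi := r^t$ of the restriction operator $r:\mathcal{C}[a,b]\to\mathcal{C}(K)$ already used in the proof of Theorem \ref{RC}: for $\ell\in\mathcal{C}(K)^*_+$ and $f\in\mathcal{C}[a,b]$, set $\Phi(\ell)(f):=\ell(rf)$. The plan is (a) to show $\Phi$ lands in the prescribed subset, (b) to construct its inverse, and (c) to verify linearity, bijectivity, and the isometry property.

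For (a), $\Phi(\ell)$ is a positive linear functional on $\mathcal{C}[a,b]$, so Theorem \ref{R} attaches to it a unique normalized monotone $\alpha$; that is, $\Phi(\ell)=L_\alpha$. Stage (i) of the proof of Theorem \ref{RC} shows that the associated Borel measure $\mu$ satisfies $\mu(K^c)=0$, and via the identification between $\alpha$ and $\mu$ (item (c) of the Appendix) this is equivalent to $\alpha$ being constant on every maximal open subinterval of $K^c$, which is the condition defining the left-hand space.

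For (b), given $L_\alpha$ with $\alpha$ constant in $K^c$, define $\Psi(L_\alpha)(g):=L_\alpha(g^*)$, where $g^*\in\mathcal{C}[a,b]$ is any continuous extension of $g\in\mathcal{C}(K)$ (existence via the elementary Tietze-style construction used in the proof of Theorem \ref{RC}). Well-definedness reduces to checking that $L_\alpha(g_1^*-g_2^*)=0$ whenever $g_1^*$ and $g_2^*$ both extend $g$: the difference vanishes on $K$ while the measure $\mu_\alpha$ attached to $\alpha$ by (c) of the Appendix satisfies $\mu_\alpha(K^c)=0$, so the integral representation gives zero. Linearity of $\Psi(L_\alpha)$, and positivity (using that the straight-line Tietze extension of a nonnegative function is nonnegative), then give $\Psi(L_\alpha)\in\mathcal{C}(K)^*_+$.

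For (c), the identities $\Psi\circ\Phi=\mathrm{id}$ and $\Phi\circ\Psi=\mathrm{id}$ follow directly from the definitions together with $r(g^*)=g$, and linearity of $\Phi$ on the cone is inherited from $r^t$. For the isometry I would let $\mathbf{1}$ denote the constant function $1$: by Remark (3) of Section 2, positivity gives $\|\ell\|=\ell(\mathbf{1})$ in $\mathcal{C}(K)^*$ and $\|\Phi(\ell)\|=\Phi(\ell)(\mathbf{1})$ in $\mathcal{C}[a,b]^*$, and $\Phi(\ell)(\mathbf{1})=\ell(r\mathbf{1})=\ell(\mathbf{1})$; an alternative route is $\|L_\alpha\|=V(\alpha)=\mu_\alpha([a,b])=\mu_\alpha(K)+\mu_\alpha(K^c)=\mu_\alpha(K)=\|\ell\|$ via Theorems \ref{R} and \ref{RC}. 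The step I expect to require the most care is the equivalence between ``$\alpha$ constant on each open piece of $K^c$'' and ``$\mu_\alpha(K^c)=0$'', which hinges on right-continuity of $\alpha$ and a small case analysis at the endpoints of the complementary intervals and at the extreme points $a,b$; everything else is bookkeeping on top of Theorems \ref{R} and \ref{RC}.
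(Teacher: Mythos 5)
Your proposal is correct and follows the same basic route as the paper: the paper proves the theorem by combining Proposition \ref{iso} ($r^t$ is an isometry) with Proposition \ref{Rang4} (the range of $r^t$ is exactly $\{L_\alpha : \alpha \text{ constant in } K^c\}$), and your steps (a)--(c) reproduce exactly this content, including the key equivalence between $\mu_\alpha(K^c)=0$ and constancy of $\alpha$ on the components of $K^c$ via \eqref{cara}. Two points where you diverge usefully: first, you build an explicit two-sided inverse $\Psi(L_\alpha)(g)=L_\alpha(g^*)$ out of the Tietze-type extension, which makes injectivity and surjectivity simultaneous and transparent, whereas the paper only exhibits a preimage (defining $\ell h=\int_K h\,d\mu$ in the ``$\supset$'' half of Proposition \ref{Rang4}) and leaves injectivity to be read off from the isometry; your $\Psi$ is essentially the content of Remark (a) after Theorem \ref{RC}, promoted to a formal inverse. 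Second, your primary isometry argument --- $\|\ell\|=\ell(\mathbf 1)$ for positive functionals, combined with $r^t(\ell)(\mathbf 1)=\ell(\mathbf 1)$ --- is more elementary than the paper's chain $\|r^t\ell\|=V(\alpha)=\mu([a,b])=\mu(K)=\|\ell\|$, since it bypasses the measure entirely; the paper's version is what you list as the ``alternative route.'' You are also right to flag the endpoint case analysis in the equivalence ``$\alpha$ constant on each component of $K^c$'' $\Leftrightarrow$ ``$\mu_\alpha(K^c)=0$'' (mass of $\mu_\alpha$ at a right endpoint $\beta_i$ of a complementary interval sits at a point of $K$, and the half-open components at $a$ or $b$ need separate treatment); the paper glosses over this, so spelling it out would be an improvement rather than a repair.
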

  Before we prove this theorem we need two preparatory results.
 
\begin{proposicion} \label{iso}
 $r^t :\mathcal C(K)^*_+ \to \mathcal C[a,b]^*_+$  is an isometry.
 \begin{proof}
 
 $\|r^t \ell\|_{\mathcal C[a,b]^*_+}=V(\alpha)=\alpha (b)-\alpha(a)=\mu([a,b])=\mu(K) +\mu([a,b]\setminus K)=\mu(K)
 =\|\ell\|_{\mathcal C(K)^*_+}$.
 
 The first equality follows from Theorem \ref{R}. The function $\alpha$ depends on $\ell$. The second is the definition of the total variation of $\alpha$ and the third is the definition of $\mu$. The last two equalities follow from the construction of Theorem \ref{RC}.

%
 
 \end{proof}
\end{proposicion}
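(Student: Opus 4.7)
The plan is to verify $\|r^t\ell\|_{\mathcal{C}[a,b]^*_+} = \|\ell\|_{\mathcal{C}(K)^*_+}$ by proving the two inequalities separately, working directly from the definition of the transpose operator together with the continuous extension technique introduced in the proof of Theorem \ref{RC}.

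For the $\leq$ direction, I would first verify that $r^t\ell$ actually lands in $\mathcal{C}[a,b]^*_+$: if $f\geq 0$ on $[a,b]$ then $rf\geq 0$ on $K$, so $(r^t\ell)(f)=\ell(rf)\geq 0$, and remark (3) of Section 2 then gives continuity. Since $r$ is norm-decreasing, $\|rf\|_\infty \leq \|f\|_\infty$, and the defining identity $(r^t\ell)(f)=\ell(rf)$ yields
\begin{equation*}
|(r^t\ell)(f)| \leq \|\ell\|\,\|rf\|_\infty \leq \|\ell\|\,\|f\|_\infty,
\end{equation*}
so $\|r^t\ell\| \leq \|\ell\|$ after taking sup over $f$ with $\|f\|_\infty\leq 1$.

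For the reverse inequality, I would reuse the extension construction already assembled inside Theorem \ref{RC}: given $g\in\mathcal{C}(K)$ with $\|g\|_\infty\leq 1$, straight-line interpolation across each connected component $(\alpha_i,\beta_i)$ of $K^c$ (and extension by the adjacent boundary value on any component touching $a$ or $b$) produces $g^*\in\mathcal{C}[a,b]$ with $rg^*=g$ and $\|g^*\|_\infty\leq\|g\|_\infty\leq 1$, since any convex combination of two numbers in $[-1,1]$ remains in $[-1,1]$. Then
\begin{equation*}
|\ell(g)| = |\ell(rg^*)| = |(r^t\ell)(g^*)| \leq \|r^t\ell\|\,\|g^*\|_\infty \leq \|r^t\ell\|,
\end{equation*}
and taking sup over such $g$ yields $\|\ell\|\leq\|r^t\ell\|$, which together with the previous step closes the equality.

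The only point requiring a moment's care is the norm-preservation $\|g^*\|_\infty \leq \|g\|_\infty$, particularly at intervals of $K^c$ that touch the endpoints $a$ or $b$, where one must extend by a constant rather than a line segment. Beyond that subtlety, the argument is a routine two-sided bound built from the definitions of $r$ and $r^t$ and the Tietze-type extension already validated in Theorem \ref{RC}.
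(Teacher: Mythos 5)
Your proof is correct, but it takes a genuinely different route from the paper. The paper establishes the isometry by a chain of equalities through quantities already computed in Theorems \ref{R} and \ref{RC}: $\|r^t\ell\| = V(\alpha) = \alpha(b)-\alpha(a) = \mu([a,b]) = \mu(K)+\mu(K^c) = \mu(K) = \|\ell\|$, invoking $\|L\|=V(\alpha)$ from Theorem \ref{R}, the fact $\mu(K^c)=0$ from part i) of the proof of Theorem \ref{RC}, and $\mu(K)=\|\ell\|$ from part ii). You instead work directly from the definition of the operator norm: the bound $\|r^t\ell\|\le\|\ell\|$ comes from the contractivity of the restriction $r$, and the reverse bound from the existence of a norm-non-increasing continuous extension $g\mapsto g^*$ via the straight-line Tietze construction --- including the constant-extension fix on components of $K^c$ abutting $a$ or $b$, which you rightly flag, since the paper's interpolation formula $f^*(x)=(1-t)f(\alpha_i)+tf(\beta_i)$ is not literally defined when an endpoint of the component lies outside $K$. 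Your route is more elementary, uses no measure theory and neither representation theorem, and in fact proves the stronger statement that $r^t$ is isometric on all of $\mathcal C(K)^*$, not only on the positive cone. What the paper's chain of equalities buys is brevity given the machinery already assembled, and it ties the norm directly to $V(\alpha)$ and $\mu$, which is precisely the bookkeeping that Proposition \ref{Rang4} and Theorem \ref{iso1} go on to exploit.
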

 We denote the range of $r^t$ by Rang $r^t =\left \{ L\in  \mathcal C[a,b]^*_+: \exists l    \in  \mathcal C(K)^*_+  \text{ s.t. } L=r^t l  \right \}$
  \begin{proposicion}\label{Rang4}
 Let $ L_\alpha$ denote the functional with corresponding monotone function $\alpha$ as introduced in \eqref{original}. Then
 $$\text{Rang } r^t=\left \{  L_\alpha\in \mathcal C[a,b]^*_+ : \alpha  \text{ is constant in } K^c    \right \}$$
 \begin{proof}
 "$\subset$"
 
 Let $L\in\text{Rang } r^t\subset \mathcal C[a,b]^*_+$. Then there exists $\ell\in \mathcal C(K)^*_+$ such that as in \eqref{nosabemos}
 $$r^t(\ell)(f)=Lf=L_\alpha f=\int_a^bfd\alpha=\int_a^bfd\mu$$
  As was shown in the proof of Theorem \ref{RC} i),   $\mu(K^c) =0$.
   Since $K^c$ is a countable union of intervals, these have $\mu$ measure  zero.  By the relation which is given in \eqref{cara} below, between the measure $\mu$ and the monotone function $\alpha$  we conclude that $\alpha$ 
 is constant in each one of the intervals of $K^c$.
  
  " $\supset$   "

Let $ L_\alpha\in \mathcal C[a,b]^*_+$ with $\alpha$ constant in each interval of $K^c$ and  $\mu$ be the measure associated with this $\alpha$, as in Appendix (c). Define $\ell \in \mathcal C(K)^*_+$ as $\ell h=\int_Kh d\mu$. We shall show that
$r^t(\ell)(f)=L_\alpha f$ for every $f\in \mathcal C[a,b]$. Since $\alpha$ constant in each interval of $K^c$ this implies, using again \eqref{cara}, that $\mu(K^c) =0$. Then we have
$$   L_\alpha f= \int_a^bfd\alpha=\int_a^bfd\mu = \int_Kfd\mu=\int_Kr(f)d\mu= \ell(r(f))= r^t(\ell)(f)             $$
where $r(f)$ denotes, as  in Theorem \eqref{RC} i) above, the restriction of $f$ to $K$.
 \end{proof}
 \end{proposicion}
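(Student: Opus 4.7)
My approach is to establish the two set-theoretic inclusions separately, leaning on Theorems \ref{R} and \ref{RC} together with the measure/function correspondence \eqref{cara} between a monotone $\alpha$ and its induced Borel measure $\mu$ from the Appendix.

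For the inclusion $\text{Rang } r^t \subseteq \{L_\alpha : \alpha \text{ is constant in } K^c\}$, I would take $L \in \text{Rang } r^t$, so $L = r^t(\ell)$ for some $\ell \in \mathcal{C}(K)^*_+$. Theorem \ref{R} supplies a unique normalized monotone $\alpha$ with $L = L_\alpha$; let $\mu$ be the Borel measure associated to $\alpha$ as in the Appendix. The proof of Theorem \ref{RC}(i) has already established $\mu(K^c)=0$ for precisely this setup. Since $K^c$ is open in $[a,b]$, it decomposes as a countable pairwise-disjoint union of open intervals, and each such interval inherits $\mu$-measure zero. Applying \eqref{cara} in the direction ``$\mu$-zero implies $\alpha$-constant'' then forces $\alpha$ to be constant on every component interval of $K^c$, which is the desired conclusion.

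For the reverse inclusion, I would start with $L_\alpha$ where $\alpha$ is constant on each component interval of $K^c$, and let $\mu$ be the Borel measure associated to $\alpha$. Using \eqref{cara} in the opposite direction on each such interval, plus countable subadditivity, yields $\mu(K^c)=0$. I then define $\ell \in \mathcal{C}(K)^*_+$ by $\ell(h)=\int_K h\,d\mu$, which is plainly positive and linear (and bounded because $\mu$ is finite). To verify $r^t(\ell)=L_\alpha$ it suffices to compute, for arbitrary $f\in \mathcal{C}[a,b]$,
\begin{equation*}
r^t(\ell)(f) = \ell(r(f)) = \int_K f\,d\mu = \int_a^b f\,d\mu = \int_a^b f\,d\alpha = L_\alpha(f),
\end{equation*}
where the third equality relies on $\mu(K^c)=0$ (so the integral of the bounded $f$ over $K^c$ vanishes) and the fourth is the Lebesgue/Riemann--Stieltjes identification supplied in item (c) of the Appendix.

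The main obstacle is really the biconditional \eqref{cara}: on a component interval $I$ of $K^c$, $\alpha$ is constant on $I$ if and only if $\mu(I)=0$. Once this is in hand, each inclusion collapses to bookkeeping about the decomposition of $K^c$ into its component intervals together with the already-proved facts about $\mu(K^c)$. A secondary delicate point is handling possible half-open components of $K^c$ at the endpoints of $[a,b]$ (an interval of the form $[a,\beta_i)$ or $(\alpha_i,b]$): the normalization convention $\alpha(a)=0$ together with the right-continuity of $\alpha$ inside $(a,b)$ must be respected when asserting constancy on such boundary pieces, but this is a routine check once \eqref{cara} is available in that case.
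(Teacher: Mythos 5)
Your proposal is correct and follows essentially the same route as the paper: both inclusions rest on the fact $\mu(K^c)=0$ from the proof of Theorem \ref{RC}(i), the decomposition of $K^c$ into countably many component intervals, and the correspondence \eqref{cara} between $\alpha$ being constant on an interval and that interval having $\mu$-measure zero. Your added remarks on countable subadditivity and on the half-open boundary components are minor refinements of the same argument, not a different approach.
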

   \begin{proof} [Proof of Theorem \ref{iso1}].\
   
 From  Proposition \ref{iso} and Proposition \ref{Rang4} it follows that $r^t$ is a bijective isometry. Since 
 $r^t$ is linear as follows from its definition, then it is an isomorphism.
 \end{proof}

{\bf Acknowledgments} 
We thank C. Bosch and Ma. C. Arrillaga for useful comments. We are grateful to Ma. R. Sanchez  for her help in the search of 
bibliographical information.

\section{Appendix}\

A collection of subsets $\mathcal A$ of $X$ is called an \textit{$\sigma$-algebra} if it is closed under  finite (countable) union, complements and $X\in \mathcal A$. If our space is $\mathbb R$, the \textit{Borel $\sigma$-algebra}, $\mathcal B_{\mathbb R}$, is the smallest $\sigma$-algebra containing all the open intervals.  A function $\mu: \mathcal A\to [0,\infty]$, where $\mathcal A$ is a $\sigma$-algebra, it is called a \textit{measure} if it  is countable additive, that is $\mu(\bigcup A_n) =\sum \mu (A_n)$ whenever $\{A_n\}$ is a disjoint sequence of elements in $\mathcal A$, and $\mu (\varnothing) =0$. A \textit{Borel measure} is a measure defined on $\mathcal B_{\mathbb R}$. We say that a measure is \textit{regular} if every measurable set can be approximated from above by open measurable sets and from below by compact measurable sets. A function $f$ from $(X,\mathcal A, \mu)$ to $(\mathbb R, B_{\mathbb R} )$ is $\mathcal A$-measurable if $\{x: f(x)\le t\}\in \mathcal A$ for all $t\in \mathbb R$.\\

The following results are used in the proof of Theorem \ref{RC}.
\begin{itemize}
   \item[(a)] Every Borel measure in a metric space is regular. We will only use inner regularity, that is, for every Borel set $A$ and every $\varepsilon>0$ there exist a compact set $F_\varepsilon$ such that $F_\varepsilon \subset A$ and $\mu (A\setminus F_\epsilon)<\epsilon$ . \cite{MR2267655} Thm 7.1.7. or \cite{Co} Lemma 1.5.7.
\item[(b)] (Dominated convergence theorem) Let $(X,\mathcal A, \mu)$ a measure spaces. Let $g$ be a $[0,\infty]$-valued integrable function on $X$, that is, $\int g d\mu <\infty$, and let $f, f_1, f_2, \ldots$ real-valued $\mathcal A$-measurable functions on $X$ such that $f(x)= \lim_n f_n (x)$ and $|f_n(x)|\le g(x)$. Then $f$ and $\{f_n\}$ are integrable and $\int f d\mu = \lim_n \int f_n d\mu$.
  \item [(c)] Given a normalized monotone function $\alpha$  in the closed interval $[a,b]$, there is a unique Borel measure $\mu$ associated with it. This can be seen as follows (see for example  \cite{MR1253752}): for $a\le s \le t \le b\,$ let define $\, \langle s, t]$ where

\end{itemize}

Let $$\mathcal F_0 = \bigg\{\bigcup_{\text{ finite}}\langle s_k, t_k]: \langle s_k, t_k] \subset [a,b] \text{ pairwise disjoint}\bigg\}$$

Then $\mathcal F_0$ is an algebra of subsets of $[a,b]$ and  therefore we can define a set function as 
\begin{equation}\label{cara}
\mu_0 \left(\bigcup_{\text{finite}}\langle s_k, t_k] \right)= \sum_{\text{finite}} \alpha(t_k)-\alpha(s_k). 
\end{equation}
Moreover, $\mu_0$ has a unique extension to a measure in the smallest $\sigma$-algebra containing $\mathcal F_0$ (Caratheodory's Theorem). See \cite {MR0200398} .  Moreover, for any continuous function $f$ it happens that
\begin{equation}\label{measure}
\int_a^bfd\alpha=\int_a^bfd\mu
\end{equation}
where the integral on the left is a Riemann-Stieltjes integral, whereas the integral on the right is an integral in the sense of Lebesgue.

\end{document}